\newtheorem{theorem}{Theorem}[section]
\newtheorem{lemma}[theorem]{Lemma}
\newtheorem{claim}[theorem]{Claim}
\newtheorem*{claim*}{Claim}
\newtheorem{corollary}[theorem]{Corollary}
\newtheorem{Main Conjecture}[theorem]{Main Conjecture}
\theoremstyle{remark}
\theoremstyle{plain}
\begin{document}
\pagestyle{plain}

\title{Schubert polynomials, 132-patterns, and Stanley's conjecture}
\author{Anna Weigandt}
\address{Dept.~of Mathematics, U.~Illinois at
Urbana-Champaign, Urbana, IL 61801, USA}
\email{weigndt2@uiuc.edu}

\date{\today}

%\begin{abstract}
%Motivated by a recent conjecture of R.~P.~Stanley we offer
%a lower bound for the sum of the coefficients of
%a Schubert polynomial in terms
%of $132$-pattern containment.
%\end{abstract}

\maketitle

%\tableofcontents

\section{Introduction}

This paper is motivated by a conjecture
of  R.~P.~Stanley \cite[Conj.~4.1]{stanley2017some} concerning the \emph{Schubert polynomials} of  A.~Lascoux and M.-P.~Sch\"utzenberger \cite{Lascoux.Schutzenberger}. 
   If $w_0=n \, n-1 \, \ldots \, 1$ is the longest permutation in $S_n$ then \[\mathfrak S_{w_0}:=x_1^{n-1}x_2^{n-2}\cdots x_{n-1}.\]   For any other $w\in S_n$, there is some $i$ so that $w(i)<w(i+1)$.  Then $\mathfrak S_{w}=\partial_i \mathfrak S_{ws_i}$, where $\displaystyle\partial_if:=\frac{f-s_if}{x_i-x_{i+1}}$ and $s_i=(i,i+1)$ acts on $f$ by exchanging the variables $x_i$ and $x_{i+1}$. The $\partial_i$'s satisfy the same braid and commutativity relations as the simple transpositions and so $\mathfrak S_w$ is well defined.

We are interested in the following specialization: $\nu_w:=\mathfrak S_w(1,1,\ldots, 1).$  Let
\begin{equation}
\label{eqn:132patterns}
 P_{132}(w):=\{(i,j,k): i<j<k  \text{ and } w(i)<w(k)<w(j)\}.
\end{equation}
Write $\eta_w:=\#P_{132}(w)$.
If $\eta_w\geq 1$ then $w$ {\bf contains} the pattern $132$.
We prove that $\eta_w$ provides a lower bound for $\nu_w$.
\begin{theorem}[The $132$-bound]
\label{theorem:bound}
For any $w\in S_n$, $\nu_w\geq \eta_w+1$.
\end{theorem}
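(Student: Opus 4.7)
My plan is to prove the theorem by induction on $w$ via the Lascoux--Sch\"utzenberger transition formula. Let $r$ be the last descent of $w$, let $s$ be the largest index with $w(s)<w(r)$, and set $v = w\,t_{r,s}$, where $t_{r,s}$ is the transposition of positions $r$ and $s$. The transition formula reads
\[
\mathfrak{S}_w \;=\; x_r\,\mathfrak{S}_v \;+\; \sum_{i\in I(w)}\mathfrak{S}_{vt_{i,r}},
\qquad
I(w) := \{i<r : \ell(vt_{i,r}) = \ell(v)+1\},
\]
and specializing each variable to $1$ yields $\nu_w = \nu_v + \sum_{i\in I(w)}\nu_{vt_{i,r}}$. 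The iterated transition is a well-founded recursion on permutations (by the standard termination argument), bottoming out at the identity where $\eta_e=0$ and $\nu_e=1$, so I may apply the inductive hypothesis on the right-hand side.

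Granting $\nu_v\ge\eta_v+1$ and $\nu_{vt_{i,r}}\ge\eta_{vt_{i,r}}+1$, the theorem reduces to the purely combinatorial inequality
\[
\eta_w \;\leq\; \eta_v + |I(w)| + \sum_{i\in I(w)}\eta_{vt_{i,r}}.
\]
I would prove this by classifying each 132-pattern $(a,b,c)\in P_{132}(w)$ by how its positions interact with $\{r,s\}$ and its values with $\{w(r),w(s)\}$. Since $w$ and $v$ agree off positions $\{r,s\}$, the 132-patterns with $\{a,b,c\}\cap\{r,s\}=\emptyset$ correspond bijectively to 132-patterns of $v$, absorbed by the $\eta_v$ term. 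Each remaining pattern must be injectively charged either to one of the $|I(w)|$ ``slots'' from the sum, or to a 132-pattern newly created in some $vt_{i,r}$ by the swap at positions $(i,r)$.

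The main obstacle is precisely this combinatorial accounting. The delicate 132-patterns are those straddling $[r,s]$ or involving the displaced value $w(r)$; these have no direct counterpart in $v$. I expect the defining property of $I(w)$, that no value of $v$ lies strictly between $v(i)$ and $v(r)$ at any intermediate position, to be exactly what guarantees every such pattern is absorbed either into one of the $|I(w)|$ slots or into a new 132-pattern of some $vt_{i,r}$. Keeping the case analysis tight, without double-counting or missing cases, is where I expect the real work to lie. As a fallback I would try a direct construction at the level of pipe dreams, producing for each 132-pattern a ladder-move image of the bottom pipe dream and showing these together with the bottom pipe dream give $\eta_w+1$ distinct reduced pipe dreams of $w$.
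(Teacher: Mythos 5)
Your proposal is a plan, not a proof: the entire content of the theorem has been pushed into the inequality
\[
\eta_w \;\leq\; \eta_v \;+\; |I(w)| \;+\; \sum_{i\in I(w)}\eta_{vt_{i,r}},
\]
and this is exactly the step you leave unproven, saying only that you ``expect'' the defining property of $I(w)$ to make the charging argument work. That expectation is where all the difficulty lives. The delicate cases are precisely the ones you flag and then do not resolve: patterns $(a,r,s)$ with $w(a)<w(s)<w(r)$ disappear entirely in $v$ and must be charged to the slots or to new patterns of some $vt_{i,r}$, and when $I(w)=\emptyset$ the inequality degenerates to the claim $\eta_w\leq\eta_v$, which needs its own argument (one must show that every lost pattern forces either an element of $I(w)$ or a compensating pattern of $v$, e.g.\ via the intermediate value $m$ witnessing $i\notin I(w)$). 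Without a complete, injective case analysis — including a check against double-counting when a triple interacts with both $r$ and $s$ — nothing is established. The transition route is in fact viable (such an analysis can be carried out), but as written your argument has a hole exactly at its load-bearing lemma; the well-foundedness of the recursion is also asserted rather than argued, though that part is standard.

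For comparison, the paper proves the bound by an entirely different and essentially self-contained mechanism, close to the ``fallback'' you mention in one sentence but do not develop. It shows $\eta_w=\sum_{(i,j)\in D(w)}r_w(i,j)$, exhibits bijections $D(w)\to\mathcal B_w$ and $D(w)\to\mathcal T_w$ preserving antidiagonals, concludes via Bergeron--Billey that the bottom and top pipe dreams are joined by a path consisting only of \emph{simple} ladder moves, and then labels the pluses by boxes of $D(w)$ so that the plus labeled $(i,j)$ must move exactly $r_w(i,j)$ times along any such path. Hence the path has exactly $\eta_w$ steps and passes through $\eta_w+1$ distinct reduced pipe dreams of $w$, giving $\nu_w=\#{\tt RP}(w)\geq\eta_w+1$. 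If you want to salvage your write-up, either carry out the full pattern-bookkeeping for the transition recurrence, or switch to making the pipe-dream construction precise as the paper does.
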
 
As a corollary, we obtain the following conjecture of R.~P.~Stanley \cite[Conj.~4.1]{stanley2017some}.
\begin{corollary}
\label{cor:main}
$\nu_w=2$ if and only if $\eta_w=1$.
\end{corollary}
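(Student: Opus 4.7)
The plan is to derive the corollary from Theorem \ref{theorem:bound} combined with the classical fact that $\eta_w=0$ precisely when $w$ is \emph{dominant}, i.e.\ when its Lehmer code $c(w)$ is weakly decreasing; such permutations satisfy $\mathfrak{S}_w = x^{c(w)}$, a single monomial, so $\nu_w = 1$. Granted this, the implication $\nu_w = 2 \Rightarrow \eta_w = 1$ is immediate: Theorem \ref{theorem:bound} forces $\eta_w \le 1$, and $\eta_w = 0$ is excluded because it would give $\nu_w = 1 \ne 2$.

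The substantive direction is $\eta_w = 1 \Rightarrow \nu_w = 2$. Theorem \ref{theorem:bound} already supplies $\nu_w \ge 2$, so only the upper bound $\nu_w \le 2$ remains. My approach is to exploit how close to dominant such a $w$ must be. Concretely, I would show that $w$ admits an ascent $i$ (a position with $w(i) < w(i+1)$) such that $v := ws_i$ is dominant and its code satisfies $c(v)_i - c(v)_{i+1} = 2$. Granted this, $\mathfrak{S}_w = \partial_i \mathfrak{S}_v = \partial_i x^{c(v)}$, and the direct calculation
\[
\partial_i\bigl(x_i^{a+2} x_{i+1}^{a}\bigr) \;=\; x_i^{a+1}x_{i+1}^{a} \;+\; x_i^{a}x_{i+1}^{a+1}
\]
(with the remaining variables untouched) exhibits $\mathfrak{S}_w$ as a sum of exactly two monomials, each specializing to $1$ at $x_1=\cdots=x_n=1$, so $\nu_w = 2$.

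The main obstacle is proving the existence of the promised ascent $i$. This requires locating the unique $132$-pattern of $w$ inside its Lehmer code and arguing that exactly one simple transposition, applied in a pinpointed position, both resolves the pattern and leaves the code in partition form with descent of size precisely $2$ at positions $i,i+1$. The argument amounts to a careful combinatorial classification of permutations with a single $132$-obstruction, tracking how that obstruction is visible in the code. An alternative route via the pipe-dream model would aim instead to show directly that $w$ with $\eta_w = 1$ has exactly two pipe dreams --- the unique pipe dream of the ``nearby'' dominant permutation plus one obtained by a single ladder move at the site of the $132$-pattern --- but this approach still demands an analogous structural case analysis.
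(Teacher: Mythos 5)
Your overall skeleton matches the paper's: the cases $\eta_w=0$ and $\eta_w\ge 2$ are handled exactly as in the paper ($\eta_w=0$ means $w$ is dominant, so $\mathfrak S_w$ is a single monomial and $\nu_w=1$; $\eta_w\ge 2$ gives $\nu_w\ge 3$ by Theorem~\ref{theorem:bound}), and your deduction that $\nu_w=2$ forces $\eta_w=1$ is correct. The difference is in the remaining implication $\eta_w=1\Rightarrow\nu_w=2$: the paper does not reprove this, it cites \cite[Section~4]{stanley2017some}, where Stanley had already established it (the open part of his conjecture was precisely the converse, which is what Theorem~\ref{theorem:bound} supplies). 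You instead sketch a direct argument: find an ascent $i$ with $v=ws_i$ dominant and $c(v)_i-c(v)_{i+1}=2$, so that $\mathfrak S_w=\partial_i x^{c(v)}$ is a sum of exactly two monomials. The divided-difference computation you give is correct, and granted the structural claim the conclusion follows.

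The genuine gap is that this structural claim --- every $w$ with exactly one $132$-pattern has an ascent $i$ such that $ws_i$ is dominant with code gap exactly $2$ at position $i$ --- is asserted but never proved; you yourself flag it as ``the main obstacle.'' That claim is the entire content of the implication $\eta_w=1\Rightarrow\nu_w\le 2$, and it requires a careful classification of permutations with a unique $132$-occurrence (or an equivalent pipe-dream argument), which your proposal only gestures at. As written, the proof is incomplete. The quickest legitimate repair is to do what the paper does and invoke Stanley's result for this direction; otherwise you must actually carry out the classification you describe, for instance by using Lemma~\ref{lemma:eta} to observe that $\eta_w=1$ means exactly one box of $D(w)$ has rank $1$ and all others rank $0$, and then deducing the required structure of $w$ from that.
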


\begin{proof}

Let $w\in S_n$. If $\eta_w=0$ then $\nu_w=1$ \cite[Chapter~4]{macdonald1991notes}. If $\eta_w=1$ then $\nu_w=2$ \cite[Section~4]{stanley2017some}.  
Otherwise, $\eta_w\geq 2$.  Then we apply Theorem~\ref{theorem:bound} and obtain
\[\nu_w\geq \eta_w+1 \geq 3.\]
As such, $\nu_w=2$ if and only if $\eta_w=1$.
\end{proof}

\section{Background on Permutations and Pipe Dreams}
\label{section:background}

We will recall the necessary background on permutations and Schubert polynomials; our references are \cite[Ch.~2]{manivel2001symmetric} and \cite{bergeron1993rc} respectively.  The {\bf Rothe diagram} of $w\in S_n$ is the set 
\begin{equation}
\label{eqn:rothe}
D(w):=\{(i,j):1\leq i,j\leq n, w(i)>j, \text{ and } w^{-1}(j)>i\}.
\end{equation} 
Notice immediately from (\ref{eqn:rothe}), we have 
\begin{equation}
\label{eqn:diagramsym}
D(w^{-1})=D(w)^t.
\end{equation}

\begin{wrapfigure}{r}{0.2\textwidth}
\centering
 \begin{tikzpicture}[x=1em,y=1em]
      \draw[color=black, thick](0,1)rectangle(7,8);
     \filldraw[color=black, fill=gray!30, thick](0,7)rectangle(1,8);
     \filldraw[color=black, fill=gray!30, thick](1,7)rectangle(2,8);
     \filldraw[color=black, fill=gray!30, thick](2,7)rectangle(3,8);
     \filldraw[color=black, fill=gray!30, thick](0,6)rectangle(1,7);
     \filldraw[color=black, fill=gray!30, thick](1,6)rectangle(2,7);
     \filldraw[color=black, fill=gray!30, thick](4,6)rectangle(5,7);
     \filldraw[color=black, fill=gray!30, thick](2,6)rectangle(3,7);
     \filldraw[color=black, fill=gray!30, thick](5,6)rectangle(6,7);
     \filldraw[color=black, fill=gray!30, thick](0,5)rectangle(1,6);
     \filldraw[color=black, fill=gray!30, thick](2,3)rectangle(3,4);
     \filldraw[color=black, fill=gray!30, thick](4,3)rectangle(5,4);
     \filldraw [black](3.5,7.5)circle(.1);
     \filldraw [black](6.5,6.5)circle(.1);
     \filldraw [black](1.5,5.5)circle(.1);
     \filldraw [black](.5,4.5)circle(.1);
     \filldraw [black](5.5,3.5)circle(.1);
     \filldraw [black](2.5,2.5)circle(.1);
     \filldraw [black](4.5,1.5)circle(.1);
     \draw[thick] (3.5,7.5)--(3.5,7);
     \draw[thick] (4,7.5)--(3.5,7.5);
     \draw[thick] (6.5,6.5)--(6.5,6);
     \draw[thick] (7,6.5)--(6.5,6.5);
     \draw[thick] (1.5,5.5)--(1.5,5);
     \draw[thick] (2,5.5)--(1.5,5.5);
     \draw[thick] (.5,4.5)--(.5,4);
     \draw[thick] (1,4.5)--(.5,4.5);
     \draw[thick] (5.5,3.5)--(5.5,3);
     \draw[thick] (6,3.5)--(5.5,3.5);
     \draw[thick] (2.5,2.5)--(2.5,2);
     \draw[thick] (3,2.5)--(2.5,2.5);
     \draw[thick] (4.5,1.5)--(4.5,1);
     \draw[thick] (5,1.5)--(4.5,1.5);
     \draw[thick] (5,7.5)--(4,7.5);
     \draw[thick] (6,7.5)--(5,7.5);
     \draw[thick] (7,7.5)--(6,7.5);
     \draw[thick] (3,5.5)--(2,5.5);
     \draw[thick] (4,5.5)--(3,5.5);
     \draw[thick] (5,5.5)--(4,5.5);
     \draw[thick] (6,5.5)--(5,5.5);
     \draw[thick] (7,5.5)--(6,5.5);
     \draw[thick] (2,4.5)--(1,4.5);
     \draw[thick] (3,4.5)--(2,4.5);
     \draw[thick] (4,4.5)--(3,4.5);
     \draw[thick] (5,4.5)--(4,4.5);
     \draw[thick] (6,4.5)--(5,4.5);
     \draw[thick] (7,4.5)--(6,4.5);
     \draw[thick] (7,3.5)--(6,3.5);
     \draw[thick] (4,2.5)--(3,2.5);
     \draw[thick] (5,2.5)--(4,2.5);
     \draw[thick] (6,2.5)--(5,2.5);
     \draw[thick] (7,2.5)--(6,2.5);
     \draw[thick] (6,1.5)--(5,1.5);
     \draw[thick] (7,1.5)--(6,1.5);
     \draw[thick] (3.5,7)--(3.5,6);
     \draw[thick] (3.5,6)--(3.5,5);
     \draw[thick] (6.5,6)--(6.5,5);
     \draw[thick] (1.5,5)--(1.5,4);
     \draw[thick] (3.5,5)--(3.5,4);
     \draw[thick] (6.5,5)--(6.5,4);
     \draw[thick] (.5,4)--(.5,3);
     \draw[thick] (1.5,4)--(1.5,3);
     \draw[thick] (3.5,4)--(3.5,3);
     \draw[thick] (6.5,4)--(6.5,3);
     \draw[thick] (.5,3)--(.5,2);
     \draw[thick] (1.5,3)--(1.5,2);
     \draw[thick] (3.5,3)--(3.5,2);
     \draw[thick] (5.5,3)--(5.5,2);
     \draw[thick] (6.5,3)--(6.5,2);
     \draw[thick] (.5,2)--(.5,1);
     \draw[thick] (1.5,2)--(1.5,1);
     \draw[thick] (2.5,2)--(2.5,1);
     \draw[thick] (3.5,2)--(3.5,1);
     \draw[thick] (5.5,2)--(5.5,1);
     \draw[thick] (6.5,2)--(6.5,1);
     \end{tikzpicture}
\end{wrapfigure} 

We may  visualize $D(w)$ as follows.
For each $i=1,\ldots, n$,  plot $(i,w(i))$.  Then,  strike out all boxes to the right and below each of the plotted points.  The boxes which remain form $D(w)$.  
For example, $D(4721635)$ is pictured to the right.
The {\bf length} of a permutation is the number of boxes in its diagram,
$\ell(w):=\#D(w).$
Each permutation has an associated rank function $r_w$, where
\begin{equation}
\label{eqn:rank}
r_w(i,j):=\#\{k:1\leq k\leq i \text{ and } w(k)\leq j\}.
\end{equation}

Schubert polynomials can be written as a sum over \emph{pipe dreams}.   Pipe dreams appear in the literature under various names; they are the \emph{pseudo-line configurations}  of  S.~Fomin and A.~N.~Kirillov  \cite{fomin1996yang}  and  the  \emph{RC-graphs} of N.~Bergeron and S.~C.~Billey \cite{bergeron1993rc}.   They were studied from a geometric perspective by A.~Knuston and E.~Miller \cite{knutson2005grobner}. 

Let $\mathbb Z_{> 0}\times \mathbb Z_{> 0}$ be the semi-infinite grid, starting from the northwest corner.  Label the rows and columns in matrix notation, i.e. position $(i,j)$ indicates the $i$th row from the top and the $j$th column from the left.  An {\bf pipe dream} is a tiling of this grid with $+$'s (pluses) and \includegraphics[scale=.6]{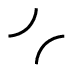}'s (elbows).    For simplicity, we will sometimes draw the elbows as dots.

We  freely identify each pipe dream with a subset of  $\mathbb Z_{> 0}\times  \mathbb Z_{> 0}$ by recording the coordinates of the pluses.
Associate a weight monomial to $\mathcal P$:
\[{\tt wt}(\mathcal P)=\prod_{(i,j)\in \mathcal P} x_i.\]
Equivalently, the exponent of $x_i$ counts the number of pluses which appear in row $i$ of $\mathcal P$.

\begin{wrapfigure}{l}{0.17\textwidth}
\centering
\vspace{-12pt}
\includegraphics[width=0.17\textwidth]{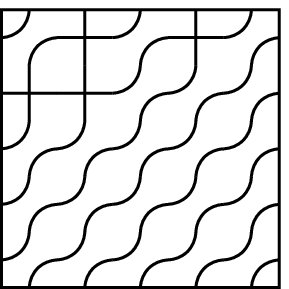}
\vspace{-25pt}
\end{wrapfigure} 

We may interpret $\mathcal P$ as a collection of overlapping strands, using the rule that a strand never bends at a right angle.  The $+$'s indicate the positions where two strands cross.  
Each  row on  the left edge of $\mathbb Z_{> 0}\times \mathbb Z_{> 0}$ is connected by some strand to a unique column along the top, and vice versa.  
If the $i$th row is connected to the $j$th column, let $w_{\mathcal P}(i):=j$.  There exists some $n$ so that $w_{\mathcal P}(i)=i$ for all $i>n$, so $w_{\mathcal P}\in S_\infty$.  In practice, we identify $w_\mathcal P$ with its representative in some finite symmetric group. For example, if $\mathcal P$ is the pipe dream pictured above, then we write $w_\mathcal P=15324$.

If $\#\mathcal P=\ell(w_\mathcal P)$ then $\mathcal P$ is {\bf reduced}.  
Let \[{\tt RP}(w):=\{\mathcal P: w_\mathcal P=w \text{ and } \mathcal P \text { is reduced}\}.\]  
\begin{theorem}[\cite{bergeron1993rc,fomin1996yang}]
\begin{equation}
\label{eqn:schubertdef}
\mathfrak S_w=\sum_{\mathcal P\in {\tt RP}(w)}{\tt wt}(\mathcal P).
\end{equation} 
\end{theorem}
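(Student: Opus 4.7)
The plan is to prove (\ref{eqn:schubertdef}) by downward induction on $\ell(w)$, using the defining divided difference recurrence $\mathfrak{S}_w = \partial_i \mathfrak{S}_{ws_i}$ whenever $w(i) < w(i+1)$. The base case is $w = w_0$, and the inductive step propagates the identity from $ws_i$ down to $w$ through a combinatorial decomposition of $\mathtt{RP}(ws_i)$ compatible with $\partial_i$.

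\textbf{Base case.} I claim $\mathtt{RP}(w_0)$ consists of a single pipe dream, the staircase $T_n := \{(i,j) : i + j \le n\}$. Any reduced pipe dream for $w_0$ has exactly $\binom{n}{2}$ pluses, and because $w_0$ reverses all $n$ strands, every pair of strands crosses exactly once. The strand entering on the left at row $1$ must exit column $n$, hence must cross the remaining $n-1$ strands; combined with the no-right-angle rule, this forces pluses at $(1,1), \ldots, (1,n-1)$ and an elbow at $(1,n)$. After pinning down row $1$, the strand in row $2$ must exit column $n-1$ and, having already crossed strand $1$, must cross strands $3, \ldots, n$ in row $2$, forcing pluses at $(2,1), \ldots, (2, n-2)$. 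Iterating through rows $3, \ldots, n-1$ forces the entire staircase $T_n$, whose weight $x_1^{n-1} x_2^{n-2} \cdots x_{n-1}$ matches $\mathfrak{S}_{w_0}$ by definition.

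\textbf{Inductive step.} Fix an ascent $i$ of $w$ and set $v := ws_i$, so $\ell(v) = \ell(w) + 1$. By induction $\mathfrak{S}_v = \sum_{Q \in \mathtt{RP}(v)} \mathtt{wt}(Q)$, and it suffices to verify
\[
\partial_i \sum_{Q \in \mathtt{RP}(v)} \mathtt{wt}(Q) \;=\; \sum_{P \in \mathtt{RP}(w)} \mathtt{wt}(P).
\]
The strategy is to partition $\mathtt{RP}(v) = \mathcal{A} \sqcup \mathcal{B}$ and define an involution $\tau$ on $\mathcal{B}$ that locally swaps the contents of rows $i$ and $i+1$ across a carefully chosen column window, so that $\mathtt{wt}(\tau Q) = s_i \cdot \mathtt{wt}(Q)$; then $\sum_{Q \in \mathcal{B}} \mathtt{wt}(Q)$ is symmetric in $x_i, x_{i+1}$ and annihilated by $\partial_i$. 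For each $Q \in \mathcal{A}$ I single out a distinguished plus at some position $(i, c)$ whose removal yields a pipe dream $P \in \mathtt{RP}(w)$, producing a bijection $\mathcal{A} \leftrightarrow \mathtt{RP}(w)$ for which a direct calculation of $\partial_i$ on $\mathtt{wt}(Q)$ recovers $\mathtt{wt}(P)$ after the symmetric residue cancels.

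\textbf{Main obstacle.} The delicate part is the local analysis near rows $i$ and $i+1$. Because $w(i) < w(i+1)$, strands $i$ and $i+1$ must cross at least once in every $Q \in \mathtt{RP}(v)$, and locating this canonical first interaction is what specifies both the distinguished plus for $\mathcal{A}$ and the pivot column for $\tau$. One must then check that $\tau(\mathcal{B}) \subseteq \mathtt{RP}(v)$ (preserving reducedness and the underlying permutation $v$), that degenerate configurations where pluses outside rows $i, i+1$ would obstruct the row swap are correctly absorbed into $\mathcal{A}$, and that the bijection $\mathcal{A} \leftrightarrow \mathtt{RP}(w)$ interacts correctly with the divided difference. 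These verifications are exactly the chute/ladder-move analysis of Bergeron-Billey, and are equivalent to the Yang-Baxter identity for ladder operators in the nilCoxeter algebra used by Fomin-Kirillov.
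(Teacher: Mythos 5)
The paper does not prove this statement --- it is imported verbatim from Bergeron--Billey and Fomin--Kirillov with a citation --- so there is no internal argument to compare against; I can only judge your attempt on its own terms. Your base case is essentially correct: strand-following forces row $1$ of any $\mathcal P\in{\tt RP}(w_0)$ to be $+$'s in columns $1,\dots,n-1$ with an elbow at $(1,n)$, the argument iterates down the rows, and the count $\binom{n}{2}$ rules out stray crossings, giving the single staircase with weight $x_1^{n-1}\cdots x_{n-1}$.

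The inductive step, however, is not a proof. First, the objects that carry all the content --- the partition ${\tt RP}(ws_i)=\mathcal A\sqcup\mathcal B$, the involution $\tau$, the ``column window,'' and the ``distinguished plus'' --- are never defined, and the verifications are explicitly outsourced to ``the chute/ladder-move analysis of Bergeron--Billey,'' i.e.\ to the literature you are trying to reprove. Second, even granting the plan, the bookkeeping does not close. If $Q\in\mathcal A$ corresponds to $P\in{\tt RP}(w)$ by deleting a plus in row $i$, then ${\tt wt}(Q)=x_i\,{\tt wt}(P)$, and the twisted Leibniz rule gives $\partial_i{\tt wt}(Q)={\tt wt}(P)+x_{i+1}\,\partial_i{\tt wt}(P)$; a monomial is not sent to a monomial by $\partial_i$. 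Summing over $\mathcal A$ therefore leaves the extra term $x_{i+1}\,\partial_i\bigl(\sum_{P\in{\tt RP}(w)}{\tt wt}(P)\bigr)$, and killing it requires knowing that $\sum_{P\in{\tt RP}(w)}{\tt wt}(P)$ is symmetric in $x_i,x_{i+1}$ whenever $w(i)<w(i+1)$. That symmetry is itself a nontrivial fact about pipe dreams (true, but needing its own involution on ${\tt RP}(w)$, not on ${\tt RP}(ws_i)$), and it is exactly what your phrase ``after the symmetric residue cancels'' sweeps under the rug. Until that symmetry is established, or the bijection is replaced by one of the genuinely worked-out mechanisms (Knutson--Miller mitosis, where one pipe dream for $ws_i$ produces a \emph{set} of pipe dreams for $w$, or the Fomin--Kirillov Yang--Baxter computation), the inductive step is a gap rather than an argument.
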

Recall, $\nu_w:=\mathfrak S_w(1,1,\ldots, 1).$  Immediately from (\ref{eqn:schubertdef}), $\nu_w=\#{\tt RP}(w)$.

There are two pipe dreams which have an explicit description in terms of $w$.  Let 
\begin{equation}
\label{eqn:m}
m_i(w)=\#\{j:j>i \text{ and } w(j)<w(i)\}.
\end{equation} 
Then the {\bf bottom pipe dream} is
\begin{equation}
\label{eqn:botbound}
\mathcal B_w=\{(i,j):j\leq m_i(w)\}.
\end{equation} 
Graphically, $\mathcal B_w$ is obtained from $D(w)$ by replacing each box with a plus and then left justifying within each row. 
We define the {\bf top pipe dream} as the transpose of the bottom pipe dream of $w^{-1}$:  
\[\mathcal T_w:=\mathcal B_{w^{-1}}^t.\]  By (\ref{eqn:diagramsym}), $\mathcal T_w$ is obtained from $D(w)$ by top justifying pluses within columns.

N.~Bergeron and S.~C.~Billey gave a procedure to obtain any pipe dream in ${\tt RP}(w)$ algorithmically, starting from $\mathcal B_w$.
A {\bf ladder move} is an operation on pipe dreams which produces a new pipe dream by a replacement of the following type:
\[\begin{array}{cc}
\cdot &\cdot \\
+&+\\
+&+\\
\vdots &\vdots \\
+&+\\
 + & \cdot 
\end{array} \quad \mapsto \quad 
\begin{array}{cc}
\cdot &+ \\
+&+\\
+&+\\
\vdots &\vdots \\
+&+\\
 \cdot & \cdot 
\end{array} \]
In the above picture, the columns and rows are consecutive.  
 If $\mathcal P\mapsto \mathcal P'$ is a ladder move, then $\mathcal P\in {\tt RP}(w)$ if and only if $\mathcal P'\in {\tt RP}(w)$ .  In other words,  ${\tt RP}(w)$ is closed under ladder moves \cite{bergeron1993rc}.   Furthermore, ${\tt RP}(w)$ is connected by ladder moves.
\begin{theorem}[Theorem 3.7 \cite{bergeron1993rc}]
\label{theorem:bb}
If $\mathcal P\in {\tt RP}(w)$, then $\mathcal P$ can be obtained by a sequence of ladder moves from $ \mathcal B_w$.
\end{theorem}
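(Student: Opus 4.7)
The plan is to prove the equivalent statement that any $\mathcal{P} \in {\tt RP}(w)$ can be transformed into $\mathcal{B}_w$ by a sequence of \emph{reverse} ladder moves; reading this sequence backwards then yields the desired chain of ladder moves from $\mathcal{B}_w$ to $\mathcal{P}$. Introduce the statistic $\sigma(\mathcal{Q}) := \sum_{(i,j) \in \mathcal{Q}} i$. A ladder move sends a plus from row $r+k+1$ (for some $k \geq 0$) up to row $r$, strictly decreasing $\sigma$, so a reverse ladder move strictly increases $\sigma$. Since every pipe dream in ${\tt RP}(w)$ for $w \in S_n$ has all its pluses inside an $n \times n$ subgrid, $\sigma$ is bounded above, and hence iteratively applying reverse ladder moves to $\mathcal{P}$ must halt at some $\mathcal{P}^\star \in {\tt RP}(w)$ to which no reverse ladder move applies.

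The heart of the argument is then the following claim: \emph{the only pipe dream in ${\tt RP}(w)$ admitting no reverse ladder move is $\mathcal{B}_w$.} First, $\mathcal{B}_w$ is itself stuck, because it is left-justified in every row and so contains no $(\cdot, +)$ pattern needed to begin a reverse ladder move. Conversely, I would show that any stuck $\mathcal{P}^\star$ must be left-justified in every row, and then deduce $\mathcal{P}^\star = \mathcal{B}_w$ by a strand-tracing uniqueness argument. For left-justification, suppose $(r,c)\in \mathcal{P}^\star$ and $(r,c-1) \notin \mathcal{P}^\star$ with $r$ maximal among such violations, and let $k \geq 0$ be maximal so that rows $r+1, \dots, r+k$ contain pluses in both columns $c-1$ and $c$. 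A case analysis on the contents of row $r+k+1$ --- using maximality of both $r$ and $k$, and in one borderline case the reduced condition $\#\mathcal{P}^\star = \ell(w)$ --- should force that row to have both cells empty, exhibiting a reverse ladder move on the window of columns $c-1,c$ from row $r$ through row $r+k+1$ and contradicting stuckness.

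Once left-justification is established, the strand entering row $i$ from the west crosses the $p_i$ pluses in row $i$ and bounces upward at column $p_i + 1$; tracing these bounces through the grid identifies $p_i$ uniquely as $m_i(w)$, so $\mathcal{P}^\star = \mathcal{B}_w$. I expect the main obstacle to be the case analysis for row $r+k+1$, specifically ruling out a plus at $(r+k+1, c-1)$ paired with an empty cell at $(r+k+1, c)$: this configuration is not itself a left-justification violation and is not immediately excluded by maximality of $r$ or $k$, so I anticipate needing to choose the extremal pair $(r,c)$ more cleverly (for example, lexicographically first among all violations) or to invoke the strand interpretation directly --- using that a strand can cross any other at most once in a reduced pipe dream --- to rule it out.
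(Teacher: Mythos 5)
The paper does not prove this statement at all---it is imported verbatim from Bergeron--Billey---so there is no internal proof to compare against; your strategy (run reverse ladder moves until stuck, then show the only stuck reduced pipe dream is $\mathcal B_w$) is the standard one and is viable. But the step you flag as an anticipated obstacle is a genuine gap, and of your two proposed remedies only the second one works. No cleverer choice of extremal violation can save you, because reducedness \emph{must} enter the argument somewhere: non-reduced pipe dreams really do get stuck at non-left-justified configurations (e.g.\ $\{(1,2),(2,1)\}$ admits no reverse ladder move, is not left-justified, and represents the identity non-reducedly), so no purely positional selection of $(r,c)$ can eliminate the bad case. What does eliminate it is the strand argument you mention but do not carry out. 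In the bad configuration the tiles $(r,c-1)$ and $(r+k+1,c)$ are elbows, while $(r,c)$, $(r+k+1,c-1)$, and all cells of columns $c-1,c$ in rows $r+1,\dots,r+k$ are crossings. The strand passing vertically through $(r+k+1,c-1)$ runs north up column $c-1$, turns east at the elbow $(r,c-1)$, and passes horizontally through $(r,c)$; the strand passing horizontally through $(r+k+1,c-1)$ turns north at the elbow $(r+k+1,c)$, runs up column $c$, and passes vertically through $(r,c)$. These two strands cross at both $(r+k+1,c-1)$ and $(r,c)$, contradicting the fact (equivalent to $\#\mathcal P=\ell(w_\mathcal P)$) that two strands of a reduced pipe dream cross at most once. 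With that case closed, maximality of $r$ handles a plus at $(r+k+1,c)$ alone, maximality of $k$ handles the doubly-full row, and the remaining case is exactly an available reverse ladder move; hence every stuck reduced pipe dream is left-justified.

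Two smaller points. For termination, the column sum $\sum_{(i,j)\in\mathcal Q}j$ is a cleaner monovariant than your row sum: each reverse ladder move decreases it by exactly $1$ and it is bounded below by $\#\mathcal Q=\ell(w)$, so you avoid having to justify that all of ${\tt RP}(w)$ sits in a bounded grid (true, but not free). And the final step---that a left-justified reduced pipe dream representing $w$ must have exactly $m_i(w)$ pluses in row $i$ and hence equal $\mathcal B_w$---is correct but is itself a nontrivial strand-tracing computation (essentially that the Lehmer code determines the permutation together with $w_{\mathcal B_v}=v$); it needs to be written out rather than asserted. You also use throughout that reverse ladder moves preserve ${\tt RP}(w)$; this is exactly the ``if and only if'' closure statement the paper records from the same source, and your argument should cite it explicitly, since both the strand argument and the final identification of the stuck pipe dream depend on remaining inside ${\tt RP}(w)$ at every step.
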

We will mostly focus on a special type of ladder move.  A {\bf simple ladder move} is a replacement of the following form:
\[\begin{array}{cc}\cdot &\cdot \\ + & \cdot \end{array} \quad \mapsto \quad \begin{array}{cc}\cdot &+ \\ \cdot & \cdot \end{array} \]
 In Lemma~\ref{lemma:biject}, we show that any sequence of ladder moves connecting $\mathcal B_w$ to $\mathcal T_w$ must contain only simple ladder moves.  We use this special structure to count the exact number of pipe dreams in any such sequence, providing a lower bound for ${\tt RP}(w)$.

\section{Proof Theorem~\ref{theorem:bound}}
\label{section:proof}

We start by interpreting $\eta_w$ as a weighted sum over $D(w)$.
\begin{lemma}
\label{lemma:eta}
\[\eta_w=\sum_{(i,j)\in D(w)} r_w(i,j)\]
\end{lemma}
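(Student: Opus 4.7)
The plan is to prove this identity by constructing an explicit bijection between $P_{132}(w)$ and the set of pairs $\{((a,b),\ell) : (a,b)\in D(w),\ \ell\leq a,\ w(\ell)\leq b\}$, whose cardinality equals $\sum_{(i,j)\in D(w)} r_w(i,j)$ by the very definition of $r_w$ in equation (\ref{eqn:rank}).

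The forward map sends a $132$-pattern $(i,j,k)$ to the pair $((j,w(k)),\,i)$. I would first verify $(j,w(k))\in D(w)$: the pattern gives $w(j)>w(k)$ (the first Rothe condition), and $w^{-1}(w(k))=k>j$ (the second Rothe condition). Then the index $i$ satisfies $i\leq j$ and $w(i)\leq w(k)$, so it is counted by $r_w(j,w(k))$.

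For the inverse, I would send $((a,b),\ell)$ to the triple $(\ell,\,a,\,w^{-1}(b))$ and check the strict inequalities that a $132$-pattern demands. Strictness $a<w^{-1}(b)$ comes directly from the Rothe condition $w^{-1}(b)>a$; strictness $\ell<a$ holds because $\ell=a$ would give $w(a)=w(\ell)\leq b$, contradicting $w(a)>b$. For the value inequalities, write $k=w^{-1}(b)$, so $w(k)=b$; then $w(k)<w(a)$ is immediate from $w(a)>b$, and $w(\ell)<w(k)$ holds because $w(\ell)=w(k)$ would force $\ell=k>a$, contradicting $\ell\leq a$. Checking that the two maps compose to the identity in both directions is routine once $w^{-1}(w(k))=k$ and $w(w^{-1}(b))=b$ are invoked.

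The main (very mild) obstacle is just tracking strict versus non-strict inequalities; what makes everything go through cleanly is that $D(w)$ is defined by two strict inequalities in (\ref{eqn:rothe}), which is precisely what rules out the degenerate cases $\ell=a$ and $w(\ell)=b$ when inverting.
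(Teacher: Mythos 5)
Your proof is correct and is essentially the paper's argument: the paper uses exactly the same two correspondences, $(i,j,k)\mapsto (j,w(k))$ with the extra index $i$, and $((a,b),\ell)\mapsto(\ell,a,w^{-1}(b))$, only packaged as a pair of inequalities ($\leq$ and $\geq$) rather than as a single explicit bijection. Your bijective packaging is, if anything, a slightly cleaner write-up of the same double count, and all the strictness checks you perform are the right ones.
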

\begin{proof}
Suppose $(i,j,k)\in  P_{132}(w)$.  Then $w(j)>w(k)$ and $w^{-1}(w(k))=k>j$.  By  (\ref{eqn:rothe}), we have $(j,w(k))\in D(w)$.  Furthermore, $i\leq j$ and $w(i)\leq w(k)$.  Then by (\ref{eqn:rank}), 
\[\#\{\ell:(\ell,j,k)\in  P_{132}(w)\}\leq \#\{\ell:\ell\leq j \text{ and } w(\ell)\leq w(k)\}=r_w(j,w(k)) .\]
Then
\begin{equation}
\label{eqn:1}
\eta_w\leq \sum_{(i,j)\in D(w)} r_w(i,j).
\end{equation}
On the other hand, suppose $(i,j)\in D(w)$.  Then \[w(i)>j=w(w^{-1}(j)) \text{ and } w^{-1}(j)>i.\]  Take \[k\in \{k:k\leq i \text{ and } w(k)\leq j\}.\]   Since $(i,j)\in D(w)$, we must have $k<i$ and $w(k)<j$.  Then \[k<i<w^{-1}(j) \text{ and } w(k)<w(w^{-1}(j))<w(i)\] and so 
\[(k,i,w^{-1}(j))\in  P_{132}(w).\]
So if $(i,j)\in D(w)$, \[ \#\{\ell:(\ell,i,w^{-1}(j))\in  P_{132}(w)\}\geq r_w(i,j).\]
Therefore, 
\begin{equation}
\label{eqn:2}
\eta_w\geq \sum_{(i,j)\in D(w)} r_w(i,j).
\end{equation}
Then combining (\ref{eqn:1}) and (\ref{eqn:2}) gives
\[\eta_w= \sum_{(i,j)\in D(w)} r_w(i,j).\qedhere \]
\end{proof}

If $\mathcal P\in{\tt RP}(w)$, let $\mathbf a_{\mathcal P}:=(a_\mathcal P(1),\ldots, a_\mathcal P(n))$ where 
\begin{equation}
\label{eqn:antidiagcount}
a_\mathcal P(k)=\#\{(i,j)\in \mathcal P:i+j-1=k\}.
\end{equation}
 Equivalently, $a_\mathcal P(k)$ is the number of pluses that occur in the $k$th antidiagonal of $\mathcal P$.  

\begin{lemma}
\label{lemma:simpleladder}
Suppose there is a path of ladder moves from  $\mathcal P$ to $\mathcal Q$:
\begin{equation}
\label{eqn:otherplus}
\mathcal P=\mathcal P_0\mapsto\mathcal P_1\mapsto \ldots \mapsto \mathcal P_N=\mathcal Q.
\end{equation}
   Each ladder move in (\ref{eqn:otherplus}) is simple if and only if $\mathbf a_{\mathcal P}=\mathbf a_{\mathcal Q}$.
\end{lemma}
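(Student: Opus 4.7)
The plan is to reduce the claim to a bookkeeping exercise about how a single ladder move affects the antidiagonal vector $\mathbf a$ and the related total-antidiagonal weight
\[ S(\mathcal P) \;:=\; \sum_{(i,j)\in\mathcal P}(i+j-1) \;=\; \sum_k k\cdot a_\mathcal P(k). \]

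First I would parameterize an arbitrary ladder move. By the definition given in Section~\ref{section:background}, any ladder move is supported on two consecutive columns $j,j+1$ and a block of consecutive rows $r,r+1,\ldots,r'$ with $r<r'$. Comparing the before and after pictures, every cell strictly between the top row $r$ and the bottom row $r'$ is unchanged; the only difference is that the plus at $(r',j)$ is removed while a new plus is inserted at $(r,j+1)$. The move is simple precisely when $r'=r+1$.

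Next I would compute the antidiagonal indices of the two cells that actually change: $(r',j)$ sits on antidiagonal $r'+j-1$, while $(r,j+1)$ sits on antidiagonal $r+j$. Their difference is $r'-r-1$, which vanishes exactly in the simple case and is at least $1$ otherwise. Consequently, a simple ladder move preserves $\mathbf a$ entirely (the displaced plus stays on its own antidiagonal), whereas a non-simple ladder move strictly decreases $S$ by exactly $r'-r-1\geq 1$ and in particular changes $\mathbf a$.

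The forward direction of the lemma is then immediate: a path consisting only of simple moves preserves $\mathbf a$ at every step, so $\mathbf a_\mathcal P=\mathbf a_\mathcal Q$. For the reverse direction I would argue contrapositively. If some ladder move in the sequence $\mathcal P_0\mapsto \mathcal P_1\mapsto\cdots\mapsto \mathcal P_N$ is non-simple, then $S$ is non-increasing across every step and strictly drops at that step, so $S(\mathcal Q)<S(\mathcal P)$. Since $S$ is a linear functional of $\mathbf a$, this forces $\mathbf a_\mathcal P\neq \mathbf a_\mathcal Q$. I do not expect a substantive obstacle here: the only care needed is in reading off the single-plus displacement produced by a ladder move and in verifying the antidiagonal computation.
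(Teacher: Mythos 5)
Your proof is correct, and for the converse it takes a genuinely different (if closely related) route from the paper. The forward direction is the same: a simple move keeps the displaced plus on its antidiagonal, so $\mathbf a$ is preserved step by step. For the reverse direction the paper argues locally: among all non-simple moves in the sequence it picks the maximal antidiagonal from which a plus is removed, notes that by maximality no plus ever enters that antidiagonal (simple moves never change antidiagonals, and a non-simple move entering it would have to come from a strictly larger one), and concludes that its count strictly drops, giving an explicit index $j$ with $a_{\mathcal P}(j)>a_{\mathcal Q}(j)$. You instead introduce the global monovariant $S(\mathcal P)=\sum_k k\,a_{\mathcal P}(k)$, show it is weakly decreasing under every ladder move and drops by exactly $r'-r-1\geq 1$ under a non-simple one, and conclude $S(\mathcal Q)<S(\mathcal P)$, hence $\mathbf a_{\mathcal Q}\neq\mathbf a_{\mathcal P}$. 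Both arguments rest on the same local computation, that the moved plus goes from antidiagonal $r'+j-1$ to antidiagonal $r+j$, and your orientation of it is the correct one: the plus lands on a weakly \emph{smaller} antidiagonal, whereas the paper's sentence ``removing a plus from the $i$th antidiagonal and replacing it in the $j$th antidiagonal with $i<j$'' has the inequality reversed relative to its own displayed picture (its maximality argument is easily repaired with the correct orientation). Your monovariant gives a one-line conclusion with no extremal choice, at the negligible cost of needing $\mathcal P$ to have finitely many pluses so that $S$ is finite (automatic here, since ladder moves preserve the number of crossings); the paper's version yields slightly sharper information, namely a specific antidiagonal whose count strictly decreases.
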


\begin{proof}
\noindent $(\Rightarrow)$ Assume each $\mathcal P_i\mapsto \mathcal P_{i+1}$ is a simple ladder move. Then $\mathcal P_{i+1}$ is obtained from $\mathcal P_{i}$ by moving a single plus to a new position in the same antidiagonal.  So  $\mathbf a_{\mathcal P_i}=\mathbf a_{\mathcal P_{i+1}}$ for each $i$.  Therefore $\mathbf a_{\mathcal P}=\mathbf a_{\mathcal Q}.$

\noindent $(\Leftarrow)$
We prove the contrapositive.  Suppose there is a nonsimple ladder move in the sequence (\ref{eqn:otherplus}).  It acts by removing a plus from the $i$th antidiagonal and replacing it in the $j$th antidiagonal  with $i<j$.   In particular, we may pick $j$ to be the maximum such label. By the maximality, no plus moves into the $j$th antidiagonal from a different antidiagonal. Then $a_{\mathcal P}(j)> a_{\mathcal Q}(j)$ and so  $\mathbf a_{\mathcal P}\neq \mathbf a_{\mathcal Q}.$
\end{proof}

Fix an indexing set $I$. A labeling of a pipe dream is an injective map $\mathcal L_\mathcal P:\mathcal P\rightarrow I$.  Suppose $\mathcal P\mapsto \mathcal P'$ is a simple ladder move.  Then $\mathcal P'$ inherits a labeling from $\mathcal P$ as follows:
\[\mathcal L_{\mathcal P'}(i,j)=
\begin{cases}
\mathcal L_{\mathcal P}(i,j) & \text{ if } (i,j)\in \mathcal P\\
\mathcal L_{\mathcal P}(i+1,j-1) & \text{ otherwise}.
\end{cases}\]
Since $\mathcal P\mapsto \mathcal P'$ is a simple ladder move, $\mathcal P'$ is obtained from $\mathcal P$ by adding some  $(i,j)$ to $\mathcal P$ and removing $(i+1,j-1)$.  So $\mathcal L_{\mathcal P'}$ is well defined.
If there is a path of simple ladder moves from $\mathcal P$ to $\mathcal Q$, then $\mathcal Q$ inherits the labeling $\mathcal L_\mathcal Q$ from $\mathcal L_\mathcal P$ inductively.
\begin{lemma}
\label{lemma:justsimple}
Let $L_\mathcal P$ be a labeling.  Suppose $\mathcal Q$  can be reached from $\mathcal P$ by simple ladder moves.  Then $\mathcal Q$ inherits the same labeling from $\mathcal P$ regardless of the choice of sequence.
\end{lemma}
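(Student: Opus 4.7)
The plan is to identify a path-independent invariant that fully determines the inherited labeling on $\mathcal{Q}$. Since every simple ladder move sends a plus from $(i+1,j-1)$ to $(i,j)$, it preserves the antidiagonal index $i+j-1$ of each plus. Consequently, by Lemma~\ref{lemma:simpleladder} the set of labels lying on each antidiagonal $k$ is the same in $\mathcal{L}_\mathcal{P}$ and $\mathcal{L}_\mathcal{Q}$; what remains is to pin down, within each antidiagonal, which position receives which label. For this I would introduce the \emph{rank from the top}: on antidiagonal $k$ of a pipe dream $\mathcal{R}$, list the pluses as $(i_1,k+1-i_1),\ldots,(i_m,k+1-i_m)$ with $i_1<\cdots<i_m$ and call the $s$-th such plus the one of rank $s$.

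The key step is to show that a single simple ladder move preserves the label of the rank-$s$ plus on every antidiagonal. Moves on an antidiagonal other than $k$ clearly fix the positions (and hence the ranks and labels) of all pluses on antidiagonal $k$. For a move on antidiagonal $k$ itself, suppose the plus at $(i+1,j-1)$ has rank $s$ and moves to $(i,j)$. Because $(i,j)$ was empty beforehand, the rank-$(s-1)$ plus (if any) sat at a row $\le i-1$, while the rank-$(s+1)$ plus sat at a row $\ge i+2$. After the move the ascending list of occupied rows on antidiagonal $k$ therefore becomes $\ldots,r_{s-1},i,r_{s+1},\ldots$, so the moved plus still has rank $s$, and the inheritance rule $\mathcal{L}_{\mathcal{P}'}(i,j)=\mathcal{L}_{\mathcal{P}}(i+1,j-1)$ assigns it the same label. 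All other labels on antidiagonal $k$ remain fixed at their positions, hence at their ranks.

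Once this rank-preservation is established, the lemma follows by induction on the length of the sequence of simple ladder moves: the label assigned to the rank-$s$ plus on antidiagonal $k$ of $\mathcal{Q}$ is forced to equal the label of the rank-$s$ plus on antidiagonal $k$ of $\mathcal{P}$, and this rule depends only on $\mathcal{P}$, $\mathcal{L}_\mathcal{P}$, and $\mathcal{Q}$, not on the chosen sequence. The main obstacle is the cleanness of the rank-preservation argument; in particular one must verify that a simple ladder move never swaps the relative order of two labeled pluses on the same antidiagonal, which is exactly what the emptiness of the target cell $(i,j)$ provides.
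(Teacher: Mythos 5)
Your proposal is correct and follows essentially the same route as the paper: the invariant is that each simple ladder move keeps every label on its antidiagonal and preserves the relative (top-to-bottom) order of labels there, which pins down the inherited labeling independently of the path. You simply spell out the order-preservation step (via the emptiness of the target cell) that the paper asserts more tersely.
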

\begin{proof}
Suppose $\mathcal P\mapsto \mathcal P'$ is a simple ladder move. Then within any antidiagonal, both pipe dreams have the same set of labels in the same relative order.  Iterate this argument along a path of simple ladder moves from $\mathcal P$ to $\mathcal Q$.  Then, in each antidiagonal, $\mathcal P$ and $\mathcal Q$ have the same set of labels, still in the same relative order.  As such, the labeling is uniquely determined and independent of the choice of path.
\end{proof}

\begin{lemma}
\label{lemma:biject}
\begin{enumerate}[label=(\Roman*)]
\item The map \[(i,j)\mapsto (i,j-r_w(i,j))\] is a bijection between $D(w)$ and $\mathcal B_w$.
\item  The map \[(i,j)\mapsto (i-r_w(i,j),j)\] is a bijection between  $D(w)$ and $\mathcal T_w$.
\item $\mathcal B_w$ and $\mathcal T_w$ are connected by simple ladder moves.
\end{enumerate}
\end{lemma}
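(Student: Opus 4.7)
The plan is to prove (I) by a direct row-by-row counting argument, deduce (II) from (I) by a transposition/inverse symmetry, and finally derive (III) by combining Lemma~\ref{lemma:simpleladder} with the ladder-move path guaranteed by Theorem~\ref{theorem:bb}.

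For (I), I would fix a row $i$ and compare the two rows directly. Row $i$ of $D(w)$ consists of the columns $j'<w(i)$ with $w^{-1}(j')>i$, and (\ref{eqn:m}) gives that there are exactly $m_i(w)$ such columns; row $i$ of $\mathcal{B}_w$ consists of the leftmost $m_i(w)$ columns by (\ref{eqn:botbound}). So the claimed map must send a box $(i,j)\in D(w)$ to the position $(i,k)$ where $k$ is the rank of $j$ among the columns occupied in row $i$ of $D(w)$. The only real calculation is the identity
\[
k=\#\{j'\leq j:(i,j')\in D(w)\}=j-\#\{\ell\leq i:w(\ell)\leq j\}=j-r_w(i,j),
\]
which uses that $(i,j)\in D(w)$ forces $j<w(i)$, so the constraint $j'<w(i)$ is automatic for $j'\leq j$, and the complement count gives $r_w(i,j)$ by (\ref{eqn:rank}). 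Injectivity and surjectivity then follow from matching row-by-row cardinalities.

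For (II), I would reduce to (I) applied to $w^{-1}$, followed by transposition. Combining (\ref{eqn:diagramsym}) with the definition $\mathcal{T}_w:=\mathcal{B}_{w^{-1}}^t$, the bijection from (I) for $w^{-1}$, namely $(j,i)\mapsto(j,i-r_{w^{-1}}(j,i))$, transposes to the map $(i,j)\mapsto(i-r_{w^{-1}}(j,i),j)$ between $D(w)$ and $\mathcal{T}_w$. One then just checks the symmetry $r_{w^{-1}}(j,i)=r_w(i,j)$, which is immediate from (\ref{eqn:rank}) by re-indexing $k\mapsto w^{-1}(k)$.

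For (III), I would compute antidiagonal indices. By (I) and (II), each $(i,j)\in D(w)$ corresponds to a plus on antidiagonal index $i+j-r_w(i,j)-1$ in both $\mathcal{B}_w$ and $\mathcal{T}_w$, so $\mathbf{a}_{\mathcal{B}_w}=\mathbf{a}_{\mathcal{T}_w}$. Theorem~\ref{theorem:bb} supplies some sequence of ladder moves from $\mathcal{B}_w$ to $\mathcal{T}_w$ (noting $\mathcal{T}_w\in{\tt RP}(w)$ since it is the transpose of $\mathcal{B}_{w^{-1}}\in{\tt RP}(w^{-1})$), and Lemma~\ref{lemma:simpleladder} then forces every move in that sequence to be simple. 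The main obstacle is the row-by-row bookkeeping in (I); once the identity $k=j-r_w(i,j)$ is in hand, (II) follows from transposition symmetry and (III) is essentially automatic from the preceding lemmas.
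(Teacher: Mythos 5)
Your proposal is correct and follows essentially the same route as the paper: part (I) via the row-by-row identity that $j-r_w(i,j)$ is the rank of $j$ among the columns of row $i$ of $D(w)$ (with $m_i(w)$ boxes per row), part (II) by transposing the statement for $w^{-1}$ together with $r_{w^{-1}}(j,i)=r_w(i,j)$, and part (III) by checking $\mathbf a_{\mathcal B_w}=\mathbf a_{\mathcal T_w}$ and invoking Theorem~\ref{theorem:bb} and Lemma~\ref{lemma:simpleladder}. Your added remark that $\mathcal T_w\in{\tt RP}(w)$ (as the transpose of $\mathcal B_{w^{-1}}$) is a point the paper leaves implicit, but otherwise the arguments coincide.
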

\begin{proof}
\noindent (I) Suppose $\ell>i$ and $w(\ell)<w(i)$.  Then since $w^{-1}(w(\ell))=\ell>i$ and $w(i)>w(\ell)$, by  (\ref{eqn:rothe}), we have $(i,w(\ell))\in D(w)$.  So \[w(\ell)\in \{j:(i,w(j))\in D(w)\}.\]

If $(i,\ell)\in D(w)$, then $w(i)>\ell=w(w^{-1}(\ell))$ and $w^{-1}(\ell)>i$.  Then \[w^{-1}(\ell)\in \{j:j>i \text{ and } w(j)<w(i)\}.\]  Therefore, the two sets are in bijection and
\begin{align*}
\#\{j:(i,j)\in D(w)\} =\#\{j:j>i \text{ and } w(j)<w(i)\}=m_i(w).
\end{align*}
Then the $i$th row of $D(w)$ has as many boxes as there are pluses in the $i$th row of $\mathcal B_w$.  

Let $j_1<j_2<\ldots<j_{m_i(w)}$ be sequence obtained by sorting the set $\{j:(i,j)\in D(w)\}$. 
\begin{align*}
j_\ell-r_w(i,j_\ell)&=j_\ell-\#\{k:k\leq i \text{ and } w(k)\leq j_\ell\}\\
&=\#\{k:k> i \text{ and } w(k)\leq j_\ell\}\\
&=\#\{j:(i,j)\in D(w) \text{ and } j\leq j_\ell \}\\
&=\ell.
\end{align*}
Therefore $(i,j_\ell)\mapsto (i,\ell)$.  Since $1\leq \ell\leq m_i(w)$ the map is well defined. This holds for any $\ell\in \{1,\ldots,m_i(w)\}$ so the map is surjective.  By definition, $j_\ell=j_{\ell'}$ if and only if $\ell=\ell'$, giving injectivity.       So this is a bijection.

\noindent (II) 
Let $\phi$ be the map defined by $(i,j)\mapsto (j,i)$.  Restricted to $D(w)$, $\phi$ is a bijection between  $D(w)$ and $D(w^{-1})$. By the definition of $\mathcal T_w$, the restriction \[\phi: \mathcal B_{w^{-1}}\rightarrow \mathcal T_w \] is also a bijection.

 Let $\psi:\mathcal P(w^{-1})\rightarrow \mathcal B_w$ the map in (I).  Then the composition
\[D(w)\xrightarrow{\phi} D(w^{-1})\xrightarrow{\psi} \mathcal B_{w^{-1}} \xrightarrow{\phi}\mathcal T_w\] is a bijection.
Computing directly,
\begin{align*}
\phi(\psi(\phi(i,j)))&=\phi(\psi(j,i))\\
&=\phi(j,i-r_{w^{-1}}(j,i))\\
&=(i-r_{w^{-1}}(j,i),j).
\end{align*}
Applying (\ref{eqn:rank}),
\begin{align*}
r_{w^{-1}}(j,i)&=\#\{k:k\leq j \text{ and } w^{-1}(k)\leq i\}\\
&=\#\{\ell:w(\ell)\leq j \text{ and } w^{-1}(w(\ell))\leq i\}\\
&=\#\{\ell: \ell\leq i \text{ and } w(\ell)\leq j \}\\
&=r_w(i,j)
\end{align*}
So $\phi(\psi(\phi(i,j)))=(i-r_w(i,j),j)$.

\noindent (III)  By Theorem~\ref{theorem:bb}, there is a path of ladder moves from $\mathcal B_w$ to $\mathcal T_w$.
Applying (\ref{eqn:antidiagcount}) and the bijections in parts (I) and (II),
\begin{align*}
\mathbf a_{\mathcal B_w}(k)&=\#\{(i,j)\in D(w):i+(j-r_w(i,j))-1=k\}\\
&=\#\{(i,j)\in D(w):(i-r_w(i,j))+j-1=k\}\\
&=\mathbf a_{\mathcal T_w}(k).
\end{align*}
By Lemma~\ref{lemma:simpleladder}, the path uses only simple ladder moves.
\end{proof}
  
In light of the previous lemma, we may label the pluses of $\mathcal B_w$ using the map $(i,j)\mapsto (i,j-r_w(i,j))$, i.e. we refer to the plus which is the image of $(i,j)$ as $+_{(i,j)}$.  Likewise we label $\mathcal T_w$ using the map $(i,j)\mapsto (i-r_w(i,j),j)$.
\begin{lemma}
\label{lemma:inherit}
The above labeling of $\mathcal T_w$ is the same as the labeling it inherits from $\mathcal B_w$.
\end{lemma}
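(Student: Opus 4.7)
The plan is to reduce the claim, via Lemma~\ref{lemma:biject}(III) and Lemma~\ref{lemma:justsimple}, to a combinatorial comparison of the two labelings of $D(w)$ within each antidiagonal. First, note that for any $(i,j)\in D(w)$, the position $(i,\,j-r_w(i,j))$ in $\mathcal B_w$ and the position $(i-r_w(i,j),\,j)$ in $\mathcal T_w$ lie on the same antidiagonal, since in both cases the sum of row and column equals $i+j-r_w(i,j)$. The proof of Lemma~\ref{lemma:justsimple} shows that a sequence of simple ladder moves preserves, within each antidiagonal, the relative order of labels (sorted by row). So it is enough to verify that the row-order of the direct labeling of $\mathcal T_w$ agrees with the row-order of the labels in $\mathcal B_w$ (which, by Lemma~\ref{lemma:justsimple}, is the same as the inherited order).

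For distinct $(i_1,j_1),(i_2,j_2)\in D(w)$ lying in the same antidiagonal, i.e., with $i_1+j_1-r_w(i_1,j_1)=i_2+j_2-r_w(i_2,j_2)$, the key sub-claim I would prove is:
\[
i_1 < i_2 \iff j_1 > j_2.
\]
Granting this, the antidiagonal identity rewrites as $(i_1-r_w(i_1,j_1))-(i_2-r_w(i_2,j_2)) = j_2-j_1$, so $i_1<i_2$ if and only if $i_1-r_w(i_1,j_1) < i_2-r_w(i_2,j_2)$. This is exactly the statement that the row-orderings in $\mathcal B_w$ and $\mathcal T_w$ agree on each antidiagonal, which is all that remains to be checked.

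To prove the sub-claim I would argue by contradiction: suppose $i_1<i_2$ and $j_1\le j_2$. A direct count splits the rank difference as
\[
r_w(i_2,j_2)-r_w(i_1,j_1) = \#\{i_1<k\le i_2 : w(k)\le j_2\} + \#\{k\le i_1 : j_1<w(k)\le j_2\},
\]
and each summand is bounded respectively by $i_2-i_1$ and by $j_2-j_1$. The antidiagonal hypothesis forces both bounds to be sharp, so in particular $w(k)\le j_2$ for every $i_1<k\le i_2$. Taking $k=i_2$ gives $w(i_2)\le j_2$, contradicting $(i_2,j_2)\in D(w)$. The main obstacle is really just establishing this sub-claim from the defining inequalities of the Rothe diagram; everything else is formal bookkeeping with antidiagonal indices and an appeal to Lemma~\ref{lemma:justsimple}.
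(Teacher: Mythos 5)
Your proposal is correct, and its overall skeleton is the same as the paper's: both reduce the statement, via Lemma~\ref{lemma:justsimple} and Lemma~\ref{lemma:biject}(III), to checking that within each antidiagonal the pluses of $\mathcal B_w$ and of $\mathcal T_w$ carry the same labels in the same top-to-bottom order, using that $(i,j-r_w(i,j))$ and $(i-r_w(i,j),j)$ lie on a common antidiagonal. Where you genuinely diverge is at the one nontrivial point: the ordering claim that, for distinct labels $(i_1,j_1),(i_2,j_2)\in D(w)$ whose pluses share an antidiagonal, $i_1<i_2$ forces $j_1>j_2$. The paper disposes of this by asserting ``$i_\ell+j_\ell-1=r$ for all $\ell$,'' i.e.\ that labels sharing a position-antidiagonal also share an antidiagonal of $D(w)$; as literally stated this is false --- for $w=51423$ the pluses $+_{(1,4)}$ (at $(1,4)$, rank $0$) and $+_{(3,3)}$ (at $(3,2)$, rank $1$) both sit on antidiagonal $4$ of $\mathcal B_w$ while $1+4-1=4\neq 5=3+3-1$ --- although the conclusion $j_1>j_2$ it is meant to support is still true. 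Your sub-claim supplies an honest proof of exactly that conclusion: the decomposition $r_w(i_2,j_2)-r_w(i_1,j_1)=\#\{i_1<k\le i_2: w(k)\le j_2\}+\#\{k\le i_1: j_1<w(k)\le j_2\}$, the bounds $i_2-i_1$ and $j_2-j_1$, and the observation that the antidiagonal identity forces both bounds to be attained, contradicting $w(i_2)>j_2$, are all correct (and the remaining bookkeeping, translating $j_1>j_2$ into $i_1-r_w(i_1,j_1)<i_2-r_w(i_2,j_2)$, is fine). So your route buys a rigorous, self-contained justification of the step the paper handles with a faulty shortcut; the paper's version is shorter but, as written, needs precisely the repair you made. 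The only cosmetic point worth adding is a sentence noting that two distinct labels on a common antidiagonal cannot have $i_1=i_2$ (they would then occupy the same position in $\mathcal B_w$, contradicting the bijectivity in Lemma~\ref{lemma:biject}(I)), so the dichotomy $i_1<i_2$ or $i_2<i_1$ in your sub-claim is exhaustive.
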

\begin{proof}
It is enough to show that within any given antidiagonal the labels in $\mathcal B_w$ and $\mathcal T_w$ are the same and have the same relative order.  If $(i,j)\in D(w)$, then $+_{(i,j)}$ is in position $(i,j-r_w(i,j))$ in $\mathcal B_w$ and in position $(i-r_w(i,j),j)$ in $\mathcal T_w$.  Since $i+j-r_w(i,j)=i-r_w(i,j)+j$, they are in the same antidiagonal.  

Now consider the $r$th antidiagonal in  $\mathcal B_w$.  Suppose the sorted list of pluses from top to bottom is \[+_{(i_1,j_1)}, +_{(i_2,j_2)}, \ldots +_{(i_k,j_k)}.\]  Since the map from $D(w)$ is by left justification, we must have $i_1<i_2<\ldots<i_k$.  Since $i_\ell+j_\ell-1=r$ for all $\ell$, it follows that $j_1>j_2>\ldots>j_k$.  Since the map from $D(w)$ to $\mathcal T_w$ is by top justification, the sorted list of pluses from top to bottom must also be 
 \[+_{(i_1,j_1)}, +_{(i_2,j_2)}, \ldots +_{(i_k,j_k)}.\]
So the labeling of $\mathcal T_w$ inherits from $\mathcal B_w$ coincides with the labeling determined by the map $(i,j)\mapsto (i-r_w(i,j),j)$.
\end{proof}

We conclude with the proof of the $132$-bound.
\begin{proof}[Proof of Theorem~\ref{theorem:bound}]
By Lemma~\ref{lemma:biject}, there is a path of simple ladder moves connecting $\mathcal B_w$ to $\mathcal T_w$, say
\begin{equation}
\label{eqn:pipesequence}
\mathcal B_w=\mathcal P_0\mapsto\mathcal P_1\mapsto \ldots \mapsto \mathcal P_N=\mathcal T_w.
\end{equation}
Let $n_{i,j}=\#\{k: \text{$\mathcal P_k \mapsto \mathcal P_{k+1}$ moves $+_{(i,j)}$}\}$.  By definition,  $\mathcal P_k \mapsto \mathcal P_{k+1}$ moves exactly one plus, labeled by an element of $D(w)$.  Therefore,
\begin{equation}
\label{eqn:n}
N=\sum_{(i,j)\in D(w)}n_{i,j}.
\end{equation}
\begin{claim}
\label{claim:plusrank}
If $(i,j)\in D(w)$ then 
$n_{i,j}=r_w(i,j)$.
\end{claim}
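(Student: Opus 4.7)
The plan is to track the position of the single labeled plus $+_{(i,j)}$ as it is carried along the sequence \eqref{eqn:pipesequence} and exploit the fact that each simple ladder move has a very predictable effect on the position of the plus it moves.

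First I would record the key geometric observation about a simple ladder move: it replaces a plus at position $(a+1,b-1)$ by a plus at position $(a,b)$, and thus \emph{strictly decreases the row index by $1$} (while strictly increasing the column index by $1$, so that the antidiagonal is preserved, consistent with Lemma~\ref{lemma:simpleladder}). In particular, if we follow any single labeled plus across the sequence \eqref{eqn:pipesequence}, its row coordinate is a strictly decreasing sequence of integers, decreasing by exactly $1$ each time that plus is the one moved, and staying constant otherwise.

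Next I would identify the starting and ending rows of $+_{(i,j)}$. By the labeling convention for $\mathcal B_w$ from Lemma~\ref{lemma:biject}(I), $+_{(i,j)}$ sits in row $i$ of $\mathcal B_w$. By Lemma~\ref{lemma:inherit}, the inherited labeling of $\mathcal T_w$ agrees with the explicit labeling from Lemma~\ref{lemma:biject}(II), so $+_{(i,j)}$ sits in row $i - r_w(i,j)$ of $\mathcal T_w$. Consequently the total decrease in the row coordinate of $+_{(i,j)}$ across \eqref{eqn:pipesequence} is
\[
i - (i - r_w(i,j)) \;=\; r_w(i,j).
\]
Combining this with the first observation, the number of moves in \eqref{eqn:pipesequence} that act on $+_{(i,j)}$ equals this net row decrease, so $n_{i,j} = r_w(i,j)$, as claimed.

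There is no real obstacle here beyond bookkeeping: all the substantive work has already been done in Lemmas~\ref{lemma:biject} and \ref{lemma:inherit}, which pin down the initial and final positions of each labeled plus, and in the definition of a simple ladder move, which guarantees that row coordinates are monotone and change only in unit steps. The only thing one must be careful about is to cite Lemma~\ref{lemma:inherit} to justify that the label $+_{(i,j)}$ really does end up at the position $(i-r_w(i,j),j)$ rather than at some other plus in $\mathcal T_w$ lying in the same antidiagonal.
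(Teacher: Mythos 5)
Your proof is correct and follows essentially the same argument as the paper: track the row coordinate of $+_{(i,j)}$, note that each simple ladder move acting on it decreases the row by exactly $1$, and use Lemma~\ref{lemma:inherit} to identify the start row $i$ and end row $i-r_w(i,j)$, giving $n_{i,j}=r_w(i,j)$.
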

\begin{proof}
By Lemma~\ref{lemma:inherit}, $+_{(i,j)}$ must move from position  $(i,j-r_w(i,j))$ in $\mathcal B_w$ to position $(i-r_w(i,j),j)$ in $\mathcal T_w$.  At each step $+_{(i,j)}$ remains stationary or it moves up row and one column to the right.  So  $+_{(i,j)}$ must move exactly  $i-(i-r_w(i,j))=r_w(i,j)$ times to go from row $i$ to row $i-r_w(i,j)$.
\end{proof}
Then
\begin{align*}
\eta_w&=\sum_{(i,j)\in D(w)} r_w(i,j) &\text{(by Lemma~\ref{lemma:eta})}\\
&=\sum_{(i,j)\in D(w)} n_{i,j} &\text{(by Claim~\ref{claim:plusrank})}\\
&=N &\text{(by (\ref{eqn:n}))}.
\end{align*}
Each $\mathcal P_i$ in the sequence (\ref{eqn:pipesequence}) is distinct. So \[\#{\tt RP}(w)\geq N+1.\]  
Therefore 
\[
\nu_w=\#{\tt RP}(w)\geq N+1=\eta_w+1. \qedhere
\]
\end{proof}

\section*{Acknowledgments}
The author would like to thank Alexander Yong for helpful conversations and feedback.  The author was supported by the Ruth V.~Shaff and Genevie I.~Andrews Fellowship.

\bibliographystyle{mybst}
\bibliography{mylib}

\end{document}